\def\Rho{\mathrm{P}}
\DeclareSymbolFont{script}{U}{eus}{m}{n}
\DeclareMathSymbol{\Wedge}{0}{script}{"5E}
\begin{document}

\title*{Conformal Loxodromes}
\author{Michael Eastwood}
\institute{Michael Eastwood \at 
University of Adelaide, South Australia 5005, \email{meastwoo@gmail.com}}

\maketitle

\abstract{In conformal differential geometry, there are some distinguished
curves, often known as `conformal circles,' since, on the round sphere, they
are the round circles (and these are conformally invariant).  But on the
two-sphere, the curves of constant compass bearing are also conformally
invariant.  These `loxodromes' admit\linebreak a curved analogue in the realm
of M\"obius geometry.  In this article, these curved analogues are explained
and the fifth order invariant ODE that they satisfy is derived.}

\begin{acknowledgement}
This article is based on a talk given at the conference `Geometry, Analysis,
and Representation Theory of Lie Groups' held at the University of Tokyo in
September 2022, honouring Toshiyuki Kobayashi on the occasion of his sixtieth
birthday.  I would like to thank the organisers of this conference for the
invitation to speak and also for the excellent job they did, especially under
the severe COVID restrictions that were in place at the time.  I would also
like to thank Toshi for many inspirational conversations over the past thirty
years.

\end{acknowledgement}

\section{Introduction} This article is concerned with {\em conformal
geometry\/} in dimension $n\geq 3$ and {\em M\"obius geometry\/} in
dimension~$2$.  These are {\em parabolic geometries\/}~\cite{CS} and, more
precisely, this article is concerned with the {\em distinguished curves\/} of a
parabolic geometry in the particular cases of conformal and M\"obius geometry.
In conformal geometry, one particular type of such distinguished curves is due
to Yano~\cite{Y}, and these are nowadays well-known as {\em conformal
geodesics\/}~\cite{T} or {\em conformal circles\/}~\cite{BE}.  Indeed, they are
so well-known that one might be tempted to suppose that there are no other
options for conformally distinguished curves.  I would like to thank Omid
Makhmali for asking a question concerning distinguished curves in conformal and
CR geometry, which I started to answer by asserting that Yano's conformal
circles are the only option.  But this is not true and the main purpose of this
article is to explain another possibility, namely the {\em conformal
loxodromes\/} of the title.  In two dimensions, i.e.~for M\"obius geometry,
these extra curves, together with conformal circles, cover all possibilities.
Conformal loxodromes also provide a nice complementary notion to that of
conformal circles: whereas it has recently been shown by Cameron, Dunajski, and
Tod~\cite{CDT} that conformal circles cannot {\em spiral\/} (i.e.~they cannot
enter and remain in every neighbourhood of a point without passing through the
point itself), conformal loxodromes on the sphere always spiral.

The final version of this article has benefitted greatly from feedback provided
by Maciej Dunajski, Josef \v{S}ilhan, Lenka Zalabov\'a, and Vojt\v{e}ch
\v{Z}\'adn\'{\i}k. I would also like to thank an anonymous referee for many
valuable suggestions.

\section{Conformal circles}\label{cc} In this section, we shall review a
particular construction of conformal circles.  There are, in fact, many
constructions~\cite{BE,CS,DK,FS,T,Y} but the route we describe here is due to
Doubrov and \v{Z}\'adn\'{\i}k~\cite{DZ} in general parabolic geometry and
implemented in~\cite{EZ} for the case of conformal geometry, ending up with the
equations of Tod~\cite{T} for {\em unparameterised\/} conformal circles.  Here
one may encounter a red herring since conformal circles are naturally equipped
with preferred parameterisations defined up to {\em projective freedom\/} (as
in~\cite{CS,FS}, for example).  Indeed, {\em any\/} curve in a conformal
manifold is equipped with such preferred parameterisations, just as any curve
in a Riemannian manifold, geodesic or not, may be equipped with preferred {\em
arc-length\/} parameterisations.  In~\cite{BE}, the equations of conformal
circles are broken into two sets, one of which controls where they go whilst
the other controls the parameterisations.

Our viewpoint in this article follows Tod~\cite{T}, who writes the equations in
terms of a chosen {\em background metric\/}, as follows.  Let $g_{ab}$ denote
the background metric on our smooth manifold $M$ and $\nabla_a$ the associated
Levi-Civita connection.  For $\gamma\hookrightarrow M$ a smooth oriented 
curve, let $U^a$ denote the unit tangent vector along $\gamma$ in the 
direction of the orientation and let $\partial\equiv U^a\nabla_a$ denote the 
{\em directional derivative\/} along~$\gamma$. This differential operator is 
well-defined on any tensor field defined along $\gamma$ and, in particular, we 
may define the {\em acceleration\/} field along $\gamma$ by
$$A_a\equiv\partial U_a,\quad\mbox{where}\enskip U_a\equiv g_{ab}U^b.$$
As usual, the curve $\gamma$ is a {\em geodesic\/} if and only if the
acceleration~$A_a$ vanishes.  This notion is not conformally invariant. 
Specifically, if $g_{ab}$ is replaced by a {\em conformally equivalent\/} 
metric $\widehat g_{ab}=\Omega^2g_{ab}$, for some smooth function~$\Omega>0$, 
then 
\begin{equation}\label{A-transform}
\widehat A_a=A_a-\Upsilon_a+U^b\Upsilon_bU_a,\quad\mbox{where}\enskip
\Upsilon_a\equiv\Omega^{-1}\nabla_a\Omega.\end{equation}
But, following Tod~\cite{T}, if we introduce the {\em normalised jerk\/}
\begin{equation}\label{jerk}
J_a\equiv\partial A_a+(A^bA_b+\Rho_{bc}U^bU^c)U_a-\Rho_{ab}U^b,\end{equation}
where $\Rho_{ab}$ is the {\em Schouten tensor\/}
\begin{equation}\label{schouten}
\textstyle\Rho_{ab}\equiv\frac1{n-2}(R_{ab}-\frac1{2(n-1)}Rg_{ab}),
\end{equation}
then it turns out that
\begin{equation}\label{jerk-transform}\widehat J_a=\Omega^{-1}J_a
\end{equation}
and so 
\begin{equation}\label{cc-equation}
\partial A_a=\Rho_{ab}U^b-(A^bA_b+\Rho_{bc}U^bU^c)U_a\end{equation}
is a conformally invariant third order ODE, which defines unparameterised 
curves known as {\em conformal circles\/}.

Notice that in this exposition we have implicitly supposed that $n\geq3$, in 
order to define the Schouten tensor~(\ref{schouten}), whose key feature is 
that 
\begin{equation}\label{Rho-transform}\textstyle\widehat\Rho_{ab}
=\Rho_{ab}-\nabla_a\Upsilon_b+\Upsilon_a\Upsilon_b
-\frac12\Upsilon^c\Upsilon_cg_{ab}.\end{equation} In two dimensions, we may
simply assert that $\Rho_{ab}$ is to be part of the structure, associated with
any choice of metric and, if $\widehat g_{ab}=\Omega^2g_{ab}$, then
this {\em Rho-tensor\/} is decreed to transform according
to~(\ref{Rho-transform}).  This is a {\em M\"obius structure\/} in the sense of
Calderbank~\cite{C} and conformal circles are defined for such structures by
exactly the same equation~(\ref{cc-equation}).

It takes some calculation to verify (\ref{jerk-transform}) and it is preferable
to set things up so that this remarkable transformation law is automatic.  In
\cite{EZ}, this was accomplished by unpacking the {\em Cartan connection\/}
formulation of unparameterised distinguished curves in parabolic geometry, due
to Doubrov-\v{Z}\'adn\'{\i}k~\cite{DZ}.  Even so, this was done indirectly (by
using the {\em standard tractor connection\/}).  Here, we take the opportunity
to do so more directly, by using the {\em adjoint tractor connection\/},
defined as follows.

Firstly, we need to define the {\em adjoint tractor bundle\/} and, for this, we
shall need the notion of {\em conformally weighted\/} densities and tensors.
In the presence of a metric~$g_{ab}$, a conformal density $\sigma$ of weight
$w$ is simply a smooth function $\sigma_g$ but if $g_{ab}$ is replaced by a
{\em conformally equivalent\/} metric~$\widehat g_{ab}=\Omega^2g_{ab}$, then we
are obliged to replace the corresponding smooth function by
$\sigma_{\widehat{g}}=\Omega^w\sigma_g$.  We may view conformal densities of
weight~$w$ as sections of a smooth line bundle~$\Wedge^0[w]$ and observe that,
if $M$ is oriented, then this notion is tied to the bundle of $n$-forms via a
canonical identification $\Wedge^n=\Wedge^0[-n]$ (see \cite{BEG} for details).
Conformally weighted tensors are defined similarly: for example, the bundle of
conformally weighted $1$-forms of weight~$2$ is
$\Wedge^1[2]\equiv\Wedge^1\otimes\Wedge^0[2]$, the conformal metric is,
tautologically, a section of the bundle $\bigodot^2\!\Wedge^1[2]$, and raising
and lowering indices using the conformal metric gives a tautological
identification $TM=\Wedge^1[2]$ (again, see~\cite{BEG} for details).  Notice 
that we may better regard the normalised jerk (\ref{jerk}) as a $1$-form of 
weight $-1$ and then (\ref{jerk-transform}) simply says that $J_a$ is 
conformally invariant.

In the presence of a metric~$g_{ab}$, the adjoint tractor bundle
${\mathcal{A}}$ is defined to be a direct sum of four individual tensor
bundles, namely
$${\mathcal{A}}\enskip\equiv\enskip
\begin{array}{c}{}\enskip TM\\ 
\Wedge^2[2]\quad\Wedge^0\\
{}\enskip\Wedge^1\end{array}\enskip=\enskip
\begin{array}{c}{}\enskip\Wedge^1[2]\\ 
\Wedge^2[2]\quad\Wedge^0\\
{}\enskip\Wedge^1\end{array}$$
but if we conformally rescale the metric $\widehat g_{ab}=\Omega^2g_{ab}$, 
then sections of ${\mathcal{A}}$ are obliged to transform as
\begin{equation}\label{adjoint_transformation}
\raisebox{12pt}{\makebox[0pt][l]{\LARGE$\enskip\widehat{\enskip\quad}$}}
\left[\begin{array}{c}\sigma_b\\
\mu_{bc}\quad\nu\\
\rho_b\end{array}\right]
=\left[\begin{array}{c}\sigma_b\\
\mu_{bc}+2\Upsilon_{[b}\sigma_{c]}\quad\nu+\Upsilon^b\sigma_b\\
\rho_b+\Upsilon^a\mu_{ab}-\Upsilon_b\nu-\Upsilon^a\Upsilon_b\sigma_a
+\frac12\Upsilon^a\Upsilon_a\sigma_b\end{array}\right],\end{equation}
where $\Upsilon_a\equiv\Omega^{-1}\nabla_a\Omega$.  It is easy to check that
the bundle ${\mathcal{A}}$ is well-defined.  Notice from
(\ref{adjoint_transformation}) that ${\mathcal{A}}$ is equipped with a
canonical surjection ${\mathcal{A}}\to TM$ (indeed, we may regard
${\mathcal{A}}$ as a canonically defined subbundle of the jet-bundle $J^2TM$).
This is a feature common to all adjoint tractor bundles in parabolic geometry
(see~\cite{CS} for details).  In generality, adjoint tractor bundles come
equipped with invariantly defined connections and, as one might expect, in
conformal geometry the adjoint tractor connection
$\nabla_a:{\mathcal{A}}\to\Wedge^1\otimes{\mathcal{A}}$ may be written in terms
of a chosen metric in the conformal class and its associated Levi-Civita
connection.  The formula is
$$\nabla_a\left[\begin{array}{c}\sigma_b\\
\mu_{bc}\quad\nu\\
\rho_b\end{array}\right]=
\left[\begin{array}{c}\nabla_a\sigma_b-\mu_{ab}-\nu g_{ab}\\
\nabla_a\mu_{bc}-2g_{a[b}\rho_{c]}+2\Rho_{a[b}\sigma_{c]}
\quad\nabla_a\nu+\rho_a+\Rho_a{}^b\sigma_b\\
\nabla_a\rho_b-\Rho_a{}^c\mu_{bc}-\Rho_{ab}\nu\end{array}\right].$$
In any case, from (\ref{Rho-transform}) and (\ref{adjoint_transformation}) it
is straightforward to verify that this formula gives an invariantly defined
connection.  In particular, for any given curve $\gamma\hookrightarrow M$ we
may use the adjoint tractor connection to define an invariant directional
derivative
$$\partial\equiv U^a\nabla_a
:{\mathcal{A}}|\gamma\to{\mathcal{A}}[-1]|\gamma,$$
the conformal weight arising because if we replace the metric $g_{ab}$ by
$\widehat g_{ab}=\Omega^2g_{ab}$, then we must replace $U^a$ by 
$\widehat U^a=\Omega^{-1}U^a$.  For a chosen metric $g_{ab}$ in the conformal 
class, let us record the formula for this invariantly defined directional 
derivative:
$$\partial\left[\begin{array}{c}\sigma_b\\
\mu_{bc}\quad\nu\\
\rho_b\end{array}\right]=
\left[\begin{array}{c}\partial\sigma_b-U^a\mu_{ab}-\nu U_b\\
\partial\mu_{bc}-2U_{[b}\rho_{c]}+2U^a\Rho_{a[b}\sigma_{c]}
\quad\partial\nu+U^a\rho_a+U^a\Rho_a{}^b\sigma_b\\
\partial\rho_b-U^a\Rho_a{}^c\mu_{bc}-U^a\Rho_{ab}\nu\end{array}\right].$$

With its index lowered, the unit velocity $U_a$ is a section of $\Wedge^1[1]$. 
Thus, it has the wrong weight in order that 
$$\left[\begin{array}{c}0\\
0\quad 0\\
U_b\end{array}\right]$$
be an invariantly defined section of~${\mathcal{A}}$. The best one can say is 
that arbitrary multiples thereof invariantly define a line subbundle 
of~${\mathcal{A}}|_\gamma$. Applying $\partial$ gives
$$\partial\left[\begin{array}{c}0\\
0\quad 0\\
U_b\end{array}\right]=
\left[\begin{array}{c}0\\
0\quad 1\\
A_b\end{array}\right]$$
and we conclude that 
$${\mathrm{span}}\left(\left[\begin{array}{c}0\\
0\quad 0\\
U_b\end{array}\right],
\left[\begin{array}{c}0\\
0\quad 1\\
A_b\end{array}\right]\right)$$
invariantly defines a rank~$2$ subbundle of ${\mathcal{A}}|_\gamma$.
Alternatively, the transformation law (\ref{adjoint_transformation}) may be
employed to check this directly:
\begin{equation}\label{direct_check}
\raisebox{12pt}{\makebox[0pt][l]{\LARGE$\enskip\widehat{\quad}$}}
\left[\begin{array}{c}0\\
0\quad 0\\
U_b\end{array}\right]
=\left[\begin{array}{c}0\\
0\quad 0\\
U_b\end{array}\right]\quad\mbox{and}\quad
\raisebox{12pt}{\makebox[0pt][l]{\LARGE$\enskip\widehat{\quad}$}}
\left[\begin{array}{c}0\\
0\quad 1\\
A_b\end{array}\right]
=\left[\begin{array}{c}0\\
0\quad 1\\
\widehat A_b\end{array}\right]
-\Upsilon^aU_a\left[\begin{array}{c}0\\
0\quad 0\\
U_b\end{array}\right].\end{equation}
In fact, there is an invariantly defined rank $3$ subbundle:
\begin{proposition}\label{the_bundle_B} 
For any smooth oriented curve $\gamma\hookrightarrow M$,
$${\mathcal{B}}\equiv{\mathrm{span}}\left(\left[\begin{array}{c}0\\
0\quad 0\\
U_b\end{array}\right],
\left[\begin{array}{c}0\\
0\quad 1\\
A_b\end{array}\right],
\left[\begin{array}{c}U_b\\
2U_{[b}A_{c]}\quad 0\\
0\end{array}\right]\right)$$
defines an invariant rank~$3$ subbundle of~${\mathcal{A}}|_\gamma$. 
\end{proposition}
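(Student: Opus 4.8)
The plan is to verify two things: that the three displayed sections of ${\mathcal A}|_\gamma$ are pointwise linearly independent, so that ${\mathcal B}$ genuinely has rank~$3$, and that the subbundle they span is unchanged when the background metric $g_{ab}$ is replaced by a conformally equivalent one $\widehat g_{ab}=\Omega^2g_{ab}$. Linear independence is immediate from the ``echelon'' shape of the three generators: in a vanishing linear combination, inspection of the top slot forces the coefficient of the third generator to vanish (the unit velocity $U^a$ is nowhere zero), then the middle slot forces the coefficient of the second to vanish, and finally the bottom slot forces the coefficient of the first to vanish.

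For the invariance, recall that $\widehat U^a=\Omega^{-1}U^a$, hence $\widehat U_b=\Omega U_b$, whilst $\widehat A_b$ is given by~(\ref{A-transform}). The span of the first two generators was already shown to be metric-independent (using~(\ref{direct_check})), so all the work is in the third generator. The plan is to substitute $\sigma_b=U_b$, $\mu_{bc}=2U_{[b}A_{c]}$, $\nu=0$, $\rho_b=0$ into the transformation law~(\ref{adjoint_transformation}), simplifying the bottom slot by means of the identity $\Upsilon^a(2U_{[a}A_{b]})=(\Upsilon^cU_c)A_b-(\Upsilon^cA_c)U_b$; this yields, for the $\widehat{g}_{ab}$-components of the section in question,
\[
\left[\begin{array}{c}U_b\\
2U_{[b}A_{c]}+2\Upsilon_{[b}U_{c]}\quad \Upsilon^cU_c\\
(\Upsilon^cU_c)A_b-(\Upsilon^cA_c)U_b-(\Upsilon^cU_c)\Upsilon_b+\frac12\Upsilon^c\Upsilon_cU_b\end{array}\right].
\]

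Rewriting this using $\widehat U_b=\Omega U_b$ and~(\ref{A-transform}) --- the term $U_{[b}U_{c]}$ conveniently dropping out of the middle slot --- one checks that it equals
\[
\Omega^{-1}\left[\begin{array}{c}\widehat{U}_b\\ 2\widehat{U}_{[b}\widehat{A}_{c]}\quad 0\\ 0\end{array}\right]
+(\Upsilon^cU_c)\left[\begin{array}{c}0\\ 0\quad 1\\ \widehat{A}_b\end{array}\right]
+\Omega^{-1}\left(\frac12\Upsilon^c\Upsilon_c-\Upsilon^cA_c-(\Upsilon^cU_c)^2\right)\left[\begin{array}{c}0\\ 0\quad 0\\ \widehat{U}_b\end{array}\right],
\]
which is a combination of the three generators built from $\widehat g_{ab}$. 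Together with the corresponding statements for the first two generators, this shows that the rank~$3$ bundle ${\mathcal B}$ built from $g_{ab}$ is contained in, and hence equal to, the one built from $\widehat g_{ab}$, which is the asserted invariance. The main obstacle, such as it is, is this last direct computation with~(\ref{adjoint_transformation}) and~(\ref{A-transform}); the one thing that needs care is the bookkeeping of conformal weights --- the stray powers of $\Omega$ produced by $\widehat U_b=\Omega U_b$ and by the precise form of~(\ref{A-transform}) --- but nothing conceptual is involved.
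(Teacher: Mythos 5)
Your proposal is correct and follows essentially the same route as the paper: the paper's proof is precisely the computation of the image of the third generator under the transformation law (\ref{adjoint_transformation}), expressed as a combination of the generators built from $\widehat{g}_{ab}$, combined with (\ref{direct_check}) for the first two. Your only additions are the explicit pointwise linear-independence check and the careful bookkeeping of the powers of $\Omega$ (which the paper suppresses, since only the span matters); both are harmless and your final identity agrees with the one displayed in the paper's proof.
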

\begin{proof} A calculation yields
$$\begin{array}{rcl}
\raisebox{12pt}{\makebox[0pt][l]{\LARGE$\enskip\widehat{\enskip\qquad}$}}
\left[\begin{array}{c}U_b\\
2U_{[b}A_{c]}\quad 0\\
0\end{array}\right]&=&\textstyle
\left[\begin{array}{c}U_b\\
2U_{[b}\widehat A_{c]}\quad 0\\
0\end{array}\right]
+\Upsilon^aU_a\left[\begin{array}{c}0\\
0\quad 1\\
\widehat A_b\end{array}\right]\\
&&{}-(\Upsilon^aA_a+(\Upsilon^aU_a)^2-\frac12\Upsilon^a\Upsilon_a)
\left[\begin{array}{c}0\\
0\quad 0\\
U_b\end{array}\right]\end{array}$$
and, together with~(\ref{direct_check}), the result follows.
\end{proof}
\begin{theorem}\label{mini_thm}
The subbundle ${\mathcal{B}}\subset{\mathcal{A}}|_\gamma$ is preserved by
$\partial$ if and only if $J_a\equiv 0$.
\end{theorem}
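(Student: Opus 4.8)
The plan is to exploit the conformal invariance of everything in sight. Both the subbundle $\mathcal{B}$ and the operator $\partial$ are conformally invariant, and the condition $J_a\equiv 0$ is invariant, so it suffices to verify the statement working in a single background metric $g_{ab}$ and the associated trivialisation of $\mathcal{A}$; in such a trivialisation the weight twist in $\partial:\mathcal{A}|\gamma\to\mathcal{A}[-1]|\gamma$ plays no role. Write $e_1,e_2,e_3$ for the three displayed generators of $\mathcal{B}$, in that order. Relative to the chosen metric these are honest, pointwise linearly independent sections of $\mathcal{A}|\gamma$, hence a frame for $\mathcal{B}$, and since $\partial$ is a covariant derivative along $\gamma$ it obeys the Leibniz rule; therefore $\mathcal{B}$ is preserved by $\partial$ if and only if $\partial e_1$, $\partial e_2$ and $\partial e_3$ are all sections of $\mathcal{B}$. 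We already know $\partial e_1=e_2$, so the content is entirely in $\partial e_2$ and $\partial e_3$.

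First I would compute $\partial e_2$ from the formula recorded above for $\partial$ on $\mathcal{A}$. The only two inputs needed are the identity $U^aA_a=0$, obtained by differentiating $U^aU_a\equiv 1$ along $\gamma$, and the defining formula~(\ref{jerk}) for the normalised jerk, rearranged as $\partial A_b-\Rho_{ab}U^a=J_b-(A^cA_c+\Rho_{cd}U^cU^d)U_b$. This yields
$$\partial e_2=-(A^cA_c+\Rho_{cd}U^cU^d)\,e_1-e_3+\left[\begin{array}{c}0\\0\quad 0\\ J_b\end{array}\right],$$
so that $\partial e_2\in\mathcal{B}$ if and only if the ``pure bottom slot'' adjoint tractor on the right-hand side, namely the one all of whose components vanish except for $J_b$ in the bottom $\Wedge^1$ slot, lies in $\mathcal{B}$.

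Next I would decide when that tractor lies in $\mathcal{B}$. A general section $\alpha e_1+\beta e_2+\gamma e_3$ of $\mathcal{B}$ has top ($TM$) slot $\gamma U_b$ and scalar ($\Wedge^0$) slot $\beta$, so requiring these to vanish forces $\gamma=\beta=0$ and the section to be a multiple of $e_1$ alone; hence the tractor above lies in $\mathcal{B}$ precisely when $J_b$ is pointwise proportional to $U_b$. But contracting~(\ref{jerk}) with $U^b$ and again using $U^aA_a=0$ gives $U^bJ_b=0$, so $J_b$ is proportional to $U_b$ if and only if $J_b\equiv 0$. This already establishes the ``only if'' direction, since $\partial$-invariance of $\mathcal{B}$ forces $\partial e_2\in\mathcal{B}$ and hence $J_a\equiv 0$.

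For the converse, suppose $J_a\equiv 0$. Then the displayed formula reduces to $\partial e_2=-(A^cA_c+\Rho_{cd}U^cU^d)\,e_1-e_3\in\mathcal{B}$, and it remains only to check $\partial e_3\in\mathcal{B}$. This is the one genuinely lengthy computation: applying the formula for $\partial$ to $e_3$ (whose slots are $U_b$; $2U_{[b}A_{c]}$, $0$; $0$), the bookkeeping concentrates in the $\Wedge^2[2]$ slot, where antisymmetrising $\partial(2U_{[b}A_{c]})$ and combining it with the term $2U^a\Rho_{a[b}U_{c]}$ produces exactly $2U_{[b}J_{c]}$ — a pleasant internal check, since this too vanishes precisely when $J\equiv 0$. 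With $J\equiv 0$ the remaining slots simplify and one obtains $\partial e_3=(\Rho_{cd}U^cU^d)\,e_2-(\Rho_{cd}U^cA^d)\,e_1\in\mathcal{B}$. Thus all three generators are preserved, so $\mathcal{B}$ is $\partial$-invariant, which completes the proof. The main obstacle is nothing deep — just the index bookkeeping in computing $\partial e_2$ and $\partial e_3$ — and the one substantive step, recognising $\partial A_b-\Rho_{ab}U^a$ as $J_b$ minus a multiple of $U_b$, is precisely the observation that makes~(\ref{jerk}) the right definition.
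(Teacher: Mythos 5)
Your proposal is correct and follows essentially the same route as the paper: compute $\partial$ of the second and third generators, observe that membership in ${\mathcal{B}}$ forces the leftover jerk term to be proportional to $U_b$, and then kill it using $U^aJ_a=0$ (which follows from $U^aA_a=0$). Your explicit remark that a general combination $\alpha e_1+\beta e_2+\gamma e_3$ with vanishing top and scalar slots must be a multiple of $e_1$ just makes precise the paper's phrase ``either of which forces $J_b=\zeta U_b$.''
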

\begin{proof}
It remains to calculate
\begin{equation}\label{partial_squared_Phi}\partial\left[\begin{array}{c}0\\
0\quad 1\\
A_b\end{array}\right]
=\left[\begin{array}{c}0\\
0\quad 0\\
J_b\end{array}\right]
-\left[\begin{array}{c}U_b\\
2U_{[b}A_{c]}\quad 0\\
0\end{array}\right]
-(A^cA_c+U^aU^c\Rho_{ac})\left[\begin{array}{c}0\\
0 \quad 0\\
U_b
\end{array}\right]\end{equation}
and
\begin{equation}\label{second_equation_from_the_proof}
\partial\left[\begin{array}{c}U_b\\
2U_{[b}A_{c]}\quad 0\\
0\end{array}\right]
=\left[\begin{array}{c}0\\
2U_{[b}J_{c]}\quad 0\\
0\end{array}\right]
+U^aU^c\Rho_{ac}\left[\begin{array}{c}0\\
0\quad 1\\
A_b\end{array}\right]
-U^aA^c\Rho_{ac}\left[\begin{array}{c}0\\
0\quad 0\\
U_b\end{array}\right]\end{equation}
either of which forces $J_b=\zeta U_b$ for some scalar~$\zeta$. However, 
from~(\ref{jerk}), 
$$U^aJ_a=U^a\partial A_a+A^aA_a=\partial(U^aA_a)=\partial(0)=0$$
so $\zeta=0$ and we are done.
\end{proof}
{\bf Remark}\enskip As pointed out an anonymous referee, one can effectively
replace the bundle ${\mathcal{B}}$ in Proposition~\ref{the_bundle_B} by
$${\mathrm{span}}\left(\Phi,\partial\Phi,\partial^2\Phi\right),
\quad\mbox{where}\enskip
\Phi\equiv\left[\begin{array}{c}0\\
0\quad 0\\
U_b\end{array}\right].$$
This bundle has the advantage of being manifestly conformally invariant and, 
when~$J_a\equiv 0$, it is clear from (\ref{partial_squared_Phi}) that it 
coincides with~${\mathcal{B}}$. Furthermore, together with the observation 
that $U^aJ_a=0$, we may deduce from
(\ref{partial_squared_Phi}) and (\ref{second_equation_from_the_proof}) that
$$\partial^2\Phi=\left[\begin{array}{c}0\\
0\quad 0\\
J_b\end{array}\right]
-\left[\begin{array}{c}U_b\\
2U_{[b}A_{c]}\quad 0\\
0\end{array}\right]
\bmod\Phi$$
and
$$\partial^3\Phi=\left[\begin{array}{c}0\\
0\quad 0\\
\partial J_b\end{array}\right]
-\left[\begin{array}{c}0\\
4U_{[b}J_{c]}\quad 0\\
0\end{array}\right]
\bmod\Phi,\partial\Phi.$$
Therefore, ${\mathrm{span}}\left(\Phi,\partial\Phi,\partial^2\Phi\right)
\subset{\mathcal{A}}|_\gamma$ is preserved by $\partial$ if and only if
$J_a\equiv 0$, as stated by the referee, who also points out that this
conclusion resembles the corresponding characterisation of unparameterised
conformal circles given in \cite[Proposition~3.3]{SZ}, namely that a certain 
rank 3 subbundle $\langle T,T^\prime,T^{\prime\prime}\rangle$ of the {\em 
standard\/} tractor bundle is parallel along~$\gamma$. The advantage 
of~${\mathcal{B}}$, however, is that it depends one fewer jets of $\gamma$ 
than does ${\mathrm{span}}\left(\Phi,\partial\Phi,\partial^2\Phi\right)$ and it
is a subbundle of the symmetry bundle derived from circles in the flat model, 
as discussed in the remainder of this section. For more on the 
characterisation of conformal circles via standard tractors, see~\cite{GST}.

\medskip 
Theorem~\ref{mini_thm} shows that the condition $J_b\equiv 0$ is conformally 
invariant. To relate this characterisation to the general criterion 
of~\cite{DZ}, we need to enlarge the subbundle~${\mathcal{B}}$ by means of 
a complementary subbundle:
$${\mathcal{C}}\equiv\left\{\left[\begin{array}{c}0\\
F_{bc}\quad 0\\
F_{bc}A^c\end{array}\right]\enskip\mbox{s.t.}\enskip
U^bF_{bc}=0\right\}.$$
Since
$$\raisebox{12pt}{\makebox[0pt][l]{\LARGE$\enskip\widehat{\enskip\quad}$}}
\left[\begin{array}{c}0\\
F_{bc}\quad 0\\
F_{bc}A^c\end{array}\right]
=\left[\begin{array}{c}0\\
F_{bc}\quad 0\\
F_{bc}A^c-F_{bc}\Upsilon^c\end{array}\right]
=\left[\begin{array}{c}0\\
F_{bc}\quad 0\\
F_{bc}\widehat A^c\end{array}\right],$$
we see that ${\mathcal{C}}\subset{\mathcal{A}}|_\gamma$ is invariantly defined
along $\gamma\hookrightarrow M$ and since
$$\partial\left[\begin{array}{c}0\\
F_{bc}\quad 0\\
F_{bc}A^c\end{array}\right]
=\left[\begin{array}{c}0\\
0\quad 0\\
F_{bc}J^c\end{array}\right]
+\left[\begin{array}{c}0\\
\tilde F_{bc}\quad 0\\
\tilde F_{bc}A^c\end{array}\right],$$
where $\tilde F_{bc}\equiv \partial F_{bc}-2U_{[b}F_{c]d}A^d$, which
satisfies~$U^b\tilde F_{bc}=0$, we see that ${\mathcal{C}}$ is preserved by 
$\partial$ along $\gamma$ when $J_a\equiv 0$. Theorem~\ref{mini_thm} can 
thus be modified as follows.
\begin{theorem}\label{full_thm}
The subbundle ${\mathcal{S}}\equiv{\mathcal{B}}\oplus{\mathcal{C}}
\subset{\mathcal{A}}_\gamma$ is preserved by $\partial$ if and only
if~$J_a\equiv 0$.
\end{theorem}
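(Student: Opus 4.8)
The plan is to treat the two implications separately; the theorem is essentially a corollary of the computations already displayed, so neither direction should require genuinely new work.

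\emph{If $J_a\equiv 0$.} Theorem~\ref{mini_thm} already gives that $\partial$ preserves $\mathcal{B}$, and the computation displayed just before the theorem already gives that $\partial$ preserves $\mathcal{C}$ when $J_a\equiv 0$: the obstruction term, which has $F_{bc}J^c$ in the bottom slot, then vanishes, while $\tilde F_{bc}=\partial F_{bc}-2U_{[b}F_{c]d}A^d$ still satisfies $U^b\tilde F_{bc}=0$. Since $\mathcal{S}=\mathcal{B}\oplus\mathcal{C}$, linearity gives at once that $\partial$ preserves $\mathcal{S}$; this direction needs nothing further.

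\emph{Conversely, if $\mathcal{S}$ is preserved.} The section $\partial\Phi=\left[\begin{array}{c}0\\ 0\quad 1\\ A_b\end{array}\right]$ lies in $\mathcal{B}\subseteq\mathcal{S}$, hence $\partial^2\Phi\in\mathcal{S}$ too. By (\ref{partial_squared_Phi}) — equivalently, by the formula for $\partial^2\Phi$ in the Remark above — the section $\partial^2\Phi$ differs from $\left[\begin{array}{c}0\\ 0\quad 0\\ J_b\end{array}\right]$ only by a linear combination of $\Phi$ and the third generator of $\mathcal{B}$, both of which lie in $\mathcal{S}$; therefore $\left[\begin{array}{c}0\\ 0\quad 0\\ J_b\end{array}\right]\in\mathcal{S}$. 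The next step is a one-line inspection of the definitions of $\mathcal{B}$ and $\mathcal{C}$: the only sections of $\mathcal{B}\oplus\mathcal{C}$ whose $TM$-, $\Wedge^2[2]$- and $\Wedge^0$-components all vanish are the multiples of $\Phi$ — vanishing of the $TM$-slot kills the coefficient of the third $\mathcal{B}$-generator, vanishing of the $\Wedge^0$-slot kills the coefficient of $\partial\Phi$, and then vanishing of the $\Wedge^2[2]$-slot forces the tensor $F_{bc}$ of the $\mathcal{C}$-part to be zero. Hence $J_b=\zeta U_b$ for some function $\zeta$ along $\gamma$; contracting with $U^b$ and using $U^aJ_a=\partial(U^aA_a)=\partial(0)=0$, exactly as at the close of the proof of Theorem~\ref{mini_thm}, forces $\zeta=0$, so $J_a\equiv 0$.

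The only step that is not completely automatic is this last linear-algebra argument pinning $J_b$ down as a multiple of $U_b$, and it rests on $\mathcal{B}\oplus\mathcal{C}$ being a genuine internal direct sum so that the splitting of $\left[\begin{array}{c}0\\ 0\quad 0\\ J_b\end{array}\right]$ is unambiguous. This, however, is immediate: every section of $\mathcal{C}$ has vanishing $TM$- and $\Wedge^0$-components, whereas the only sections of $\mathcal{B}$ with those two components zero are the multiples of $\Phi$, and such a multiple lies in $\mathcal{C}$ only if it vanishes, so $\mathcal{B}\cap\mathcal{C}=0$. Beyond this point the whole argument is bookkeeping with (\ref{partial_squared_Phi}), the displayed derivative along $\mathcal{C}$, and the three explicit generators of $\mathcal{B}$, and I anticipate no real obstacle, the second implication being marginally the more delicate of the two.
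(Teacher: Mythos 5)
Your proof is correct and follows the same route the paper intends: the paper states Theorem~\ref{full_thm} without a displayed proof, relying on Theorem~\ref{mini_thm} together with the computation of $\partial$ on sections of ${\mathcal{C}}$ immediately preceding it. Your write-up simply supplies the details left implicit there, in particular the converse direction, where you correctly check that a section of ${\mathcal{S}}$ with vanishing $TM$-, $\Wedge^2[2]$- and $\Wedge^0$-slots must be a multiple of $\Phi$, so that $J_b=\zeta U_b$ and then $U^aJ_a=0$ forces $J_a\equiv 0$, exactly as at the end of the proof of Theorem~\ref{mini_thm}.
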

As detailed in~\cite{EZ}, the advantage of Theorem~\ref{full_thm} is that the
subbundle ${\mathcal{S}}$ has a geometric interpretation, roughly as follows.
Firstly, as its name suggests, the adjoint tractor bundle is modelled on the
adjoint representation ${\mathfrak{so}}(n+1,1)$ of
${\mathrm{SO}}^\uparrow(n+1,1)$, more precisely on the homogeneous vector
bundle
$$\begin{array}{c}
{\mathrm{SO}}^\uparrow(n+1,1)\times_P{\mathfrak{so}}(n+1,1)\\
\downarrow\\
{\mathrm{SO}}^\uparrow(n+1,1)/P=S^n,
\end{array}$$
where ${\mathrm{SO}}^\uparrow(n+1,1)$ is the connected component of the
identity of ${\mathrm{SO}}(n+1,1)$ acting on $S^n$ by conformal transformations
with $P$ as stabiliser subgroup.  The adjoint tractor connection is modelled on
the flat connection on this homogeneous bundle induced by the Maurer-Cartan
form (see, e.g.~\cite{CS}).  The Lie algebra ${\mathfrak{so}}(n+1,1)$ may be realised as the space of
conformal Killing fields on the round $n$-sphere $S^n$ and, as such, comes
equipped with a family of Lie subalgebras, namely the symmetry algebras of the
family of round circles $S^1\hookrightarrow S^n$.  As detailed in~\cite{EZ},
these symmetry algebras have dimension
$$3+\frac{(n-1)(n-2)}2=\dim{\mathcal{B}}+\dim{\mathcal{C}}$$
and, indeed, the subbundle ${\mathcal{S}}\subset{\mathcal{A}}|\gamma$ is
modelled on this family of subalgebras.  See~\cite{EZ} for a precise unpacking
of the formulation of unparameterised distinguished curves in parabolic
geometry due to Doubrov-\v{Z}\'adn\'{\i}k~\cite{DZ} in terms of the adjoint
tractor bundle, arriving at Theorem~\ref{full_thm} above for conformal
geometry.  

In $2$ dimensions, the formulation is exactly the same, if not simpler, since
here we may identify $S^2$ as the Riemann sphere ${\mathbb{CP}}_1$, with
${\mathrm{SL}}(2,{\mathbb{C}})$ acting by M\"obius transformations.  In $2$
dimensions ${\mathcal{C}}=0$ so ${\mathcal{S}}={\mathcal{B}}$ has rank~$3$.  We
may take ${\mathbb{RP}}_1\hookrightarrow{\mathbb{CP}}_1$ as a typical circle in
the round $2$-sphere.  It is the unique closed orbit of
${\mathrm{SL}}(2,{\mathbb{R}})$ and the moduli space of all circles may
therefore be identified with
${\mathrm{SL}}(2,{\mathbb{C}})/{\mathrm{SL}}(2,{\mathbb{R}})$.  The fibres of
${\mathcal{B}}$ are conjugate to
${\mathfrak{sl}}(2,{\mathbb{R}})\subset{\mathfrak{sl}}(2,{\mathbb{C}})$.

\section{Loxodromes} It is well-known that conformal transformations of the
round $n$-sphere preserve the round circles.  Moreover, each such circle is
{\em homogeneous\/}, i.e.~there is a subgroup of the conformal transformations
that acts transitively on the circle.  This property can be stated
infinitesimally in terms of the Lie algebra of conformal Killing fields,
specifically that the `symmetry algebra' of any fixed circle be non-zero
(see~\cite{DZ,EZ} for details).  This is especially congenial when~$n=2$, since
we may employ complex numbers as follows.  On an affine chart
${\mathbb{C}}\hookrightarrow{\mathbb{CP}}_1$, the conformal Killing fields are 
\begin{equation}\label{killing}
X=\Re(Z),\enskip\mbox{where}\enskip Z=(az^2+bz+c)\frac\partial{\partial z}.
\end{equation}
With usual conventions $z=x+iy$, recall that
$$\frac\partial{\partial z}
=\frac12\left(\frac\partial{\partial x}-i\frac\partial{\partial y}\right)$$
so if we set 
$$a=P+iQ,\quad b=2(\lambda+iF),\quad c=2(u+iv),$$
then we find that 
\begin{equation}\label{conformal_motions}\begin{array}{l}\displaystyle X
=\overbrace{u\frac\partial{\partial x}
+v\frac\partial{\partial y}}^{\makebox[0pt]{translations}}
+\overbrace{\lambda\Big(x\frac\partial{\partial x}
+y\frac\partial{\partial y}\Big)}^{\makebox[0pt]{dilations}}
+\overbrace{F\Big(x\frac\partial{\partial y}
-y\frac\partial{\partial x}\Big)}^{\makebox[0pt]{rotations}}\\[12pt]
\displaystyle\hspace{50pt}{}
+\underbrace{P\Big(\frac{x^2-y^2}2\frac\partial{\partial x}
+xy\frac\partial{\partial y}\Big)
+Q\Big(\frac{x^2-y^2}2\frac\partial{\partial y}
-xy\frac\partial{\partial x}\Big)}_{\makebox[0pt]{inversions}},
\end{array}\end{equation}
as expected.  If we now fix $p,q\in{\mathbb{C}}$ and 
$\beta\in{\mathbb{R}}_{>0}$, then the curve
\begin{equation}\label{lox_from_p_to_q}{\mathbb{R}}\ni\theta\mapsto
z=pq\frac{e^{(\beta+i)\theta}-1}{pe^{(\beta+i)\theta}-q}\end{equation}
satisfies
\begin{equation}\label{derivative_of_theta}\frac{d\theta}{dz}
=(\beta+i)\Big[\frac1{p-q}z^2-\frac{p+q}{p-q}z+\frac{pq}{p-q}\Big]
\end{equation}
and looks like this:
\begin{center}\begin{picture}(73,73)
\put(0,0){\includegraphics[scale=.13]{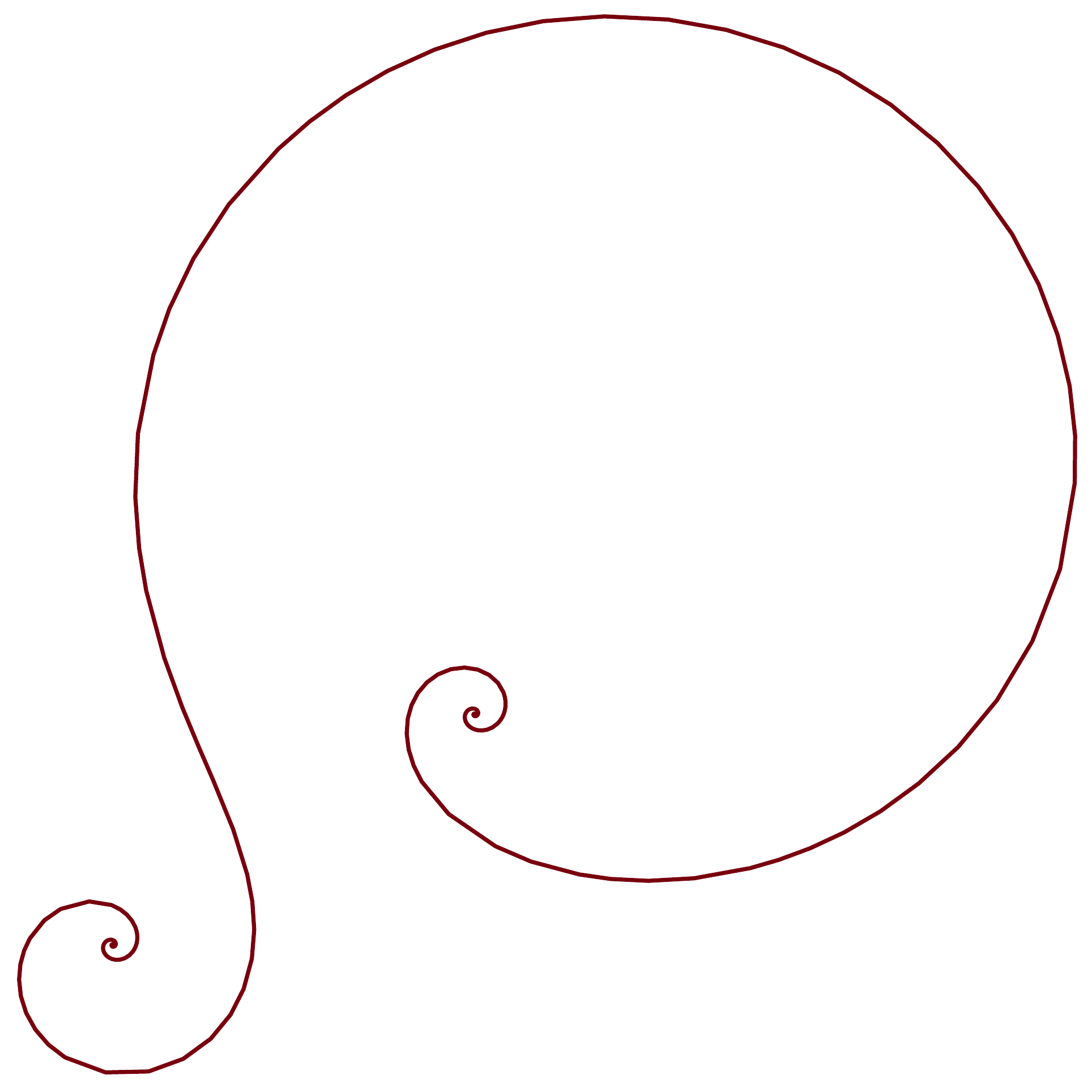}}
\put(-11,8){$p$}
\put(-4.5,8){\scriptsize$\longrightarrow$}
\put(17,23.2){$q$}
\put(23,23.2){\scriptsize$\rightarrow$}
\end{picture}\end{center}
It follows from (\ref{killing}) and (\ref{derivative_of_theta}) that 
\begin{equation}\label{generator}
\Re\left((\beta+i)\Big[\frac1{p-q}z^2-\frac{p+q}{p-q}z+\frac{pq}{p-q}\Big]
\frac\partial{\partial z}\right)\end{equation}
is a conformal symmetry of (\ref{lox_from_p_to_q}).  Alternatively, if we make 
a fractional linear change of co\"ordinates
$$\zeta=\frac{qz-pq}{pz-qp}$$
so that $p\mapsto 0$ and $q\mapsto\infty$, then the curve 
(\ref{lox_from_p_to_q}) becomes the familiar logarithmic spiral
\begin{equation}\label{log_spiral}
\zeta = e^{(\beta+i)\theta}\qquad
\raisebox{-20pt}{\begin{picture}(50,47)(0,25)
\put(0,22){\includegraphics[scale=.1]{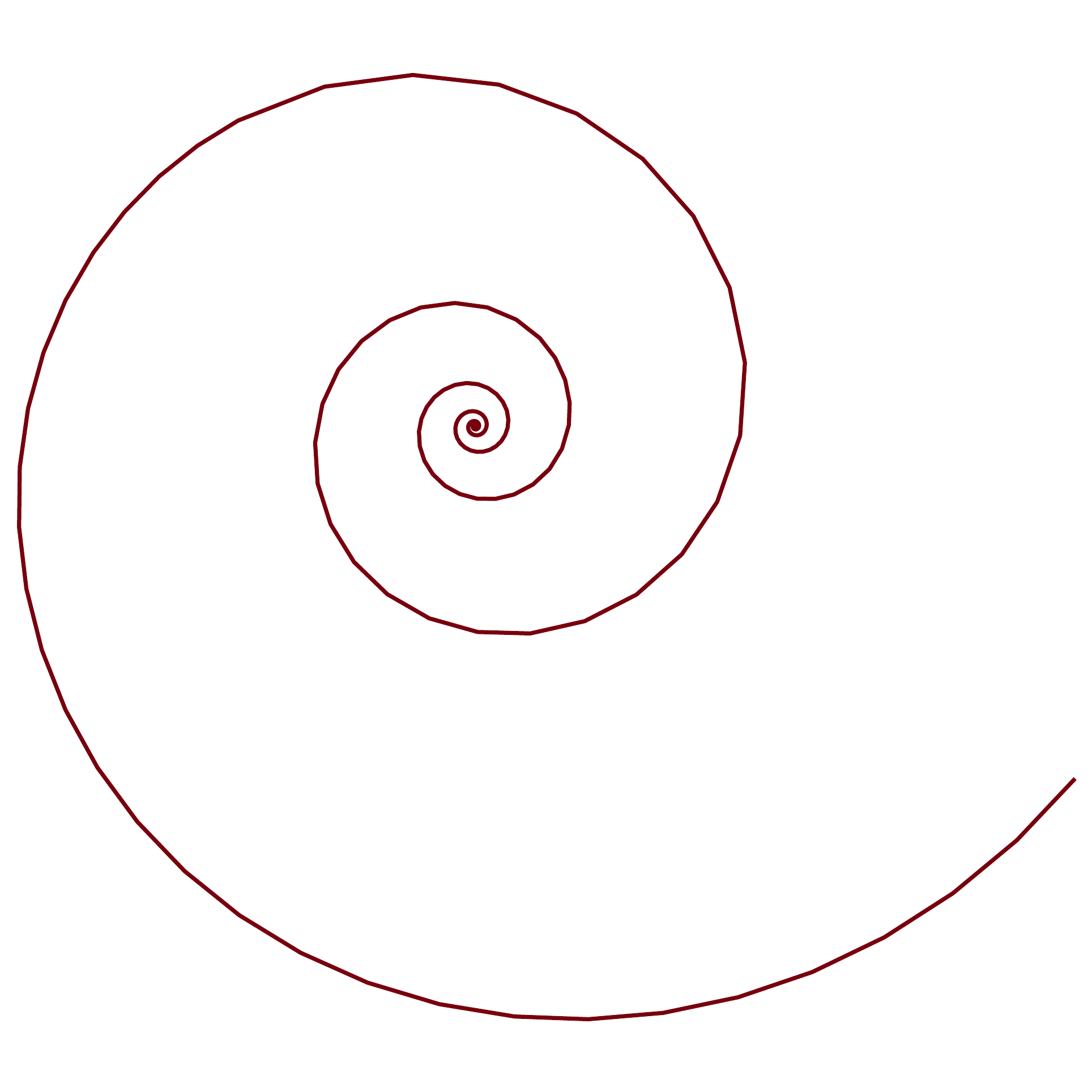}}
\end{picture}}\end{equation}
and under the Mercator projection $\zeta=e^{2\pi i(u-iv)}$ becomes the curve
$v=\beta u$ of constant bearing~$\beta$, known in navigation as a {\em Rhumb
line\/} or {\em loxodrome\/}.  Setting $\beta=0$ in (\ref{log_spiral}) gives a
circle as does letting $\beta\uparrow\infty$.  In the Mercator projection these
circles are the equator or prime meridian, respectively.  Otherwise, it is
clear that $\beta$ is a conformal invariant.  Just like household screws,
loxodromes come in two types: \begin{center}\begin{picture}(80,80)
\put(-50,-7){\includegraphics[scale=.27]{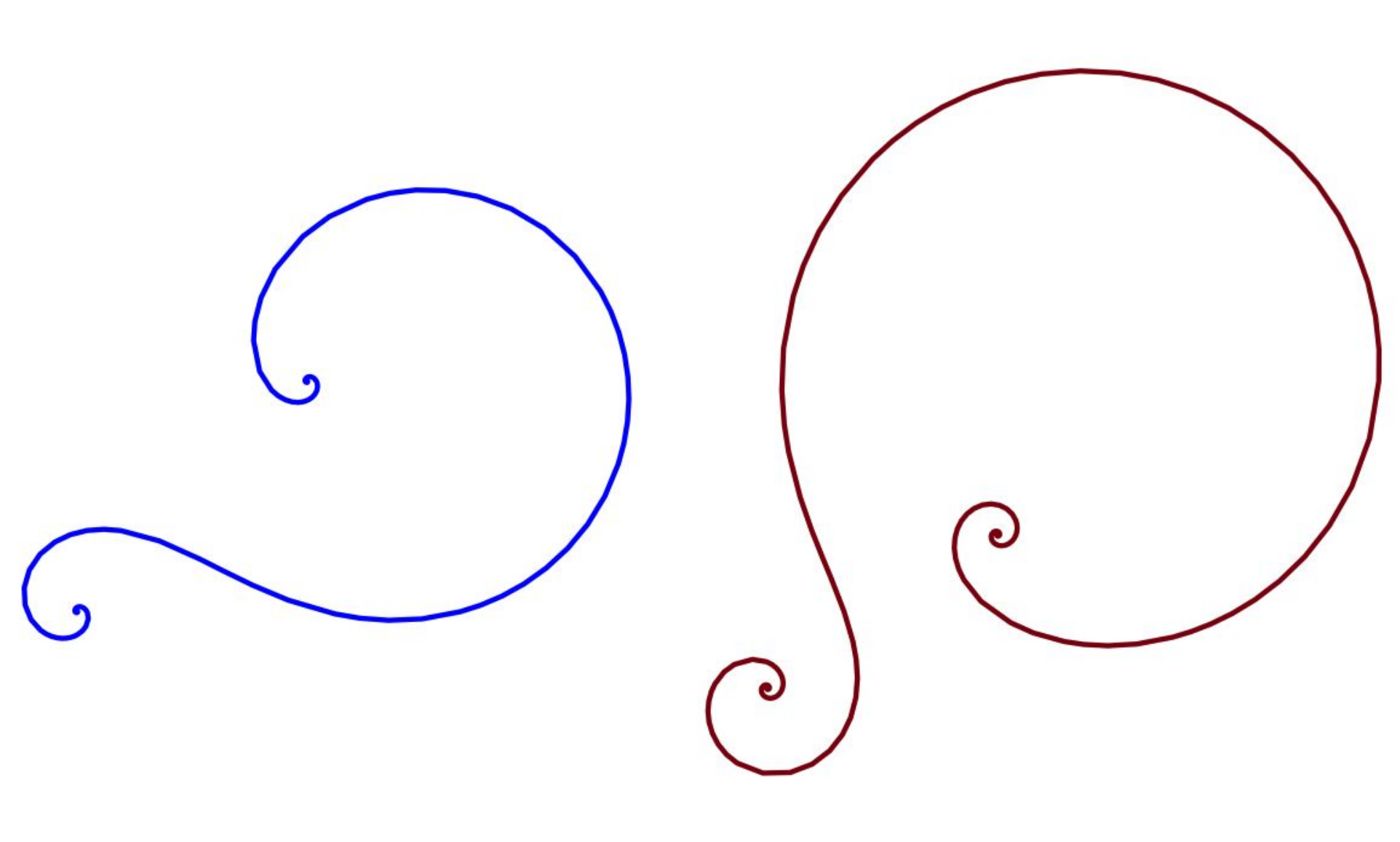}}
\put(42,40){right-handed\quad $\beta>0$} \put(-37,30){left-handed}
\put(-37,7){$\beta<0$}
\end{picture}\end{center}
and henceforth we shall concentrate on {\em right-handed loxodromes\/}, in
which case we can extract the conformal invariant $\beta$ from the discriminant
of its generator (\ref{generator}).  Specifically
$${\mathrm{discriminant}}
\left((\beta+i)\Big[\frac1{p-q}z^2-\frac{p+q}{p-q}z+\frac{pq}{p-q}\Big]\right)
=(\beta+i)^2=\beta^2-1+2\beta i,$$
which, up to real scale, yields
\begin{equation}\label{normalised_discriminant}
\frac{\beta^2-1}\beta=\beta-\frac1\beta,\end{equation}
which, in turn, precisely determines the bearing $\beta>0$.  In fact, up to
scale, there are no other conformal symmetries of the loxodrome
(\ref{lox_from_p_to_q}).  One can check this by calculation (following
\cite[\S2]{EZ}, for example) but also by counting degrees of freedom as
follows.  For fixed bearing~$\beta$, the formula (\ref{lox_from_p_to_q}) gives
a loxodrome with the origin as a marked point.  Counting $2$ dimensions for
each of $p$ and $q$ together with an additional $2$ dimensions for an arbitrary
translation therefore gives a $6$-dimensional space of marked right-handed
loxodromes with bearing~$\beta$.  Erasing the mark gives a $5$-dimensional
moduli space of right-handed loxodromes with bearing~$\beta$.  This moduli
space is homogeneous under the $6$-dimensional symmetry group
${\mathrm{SL}}(2,{\mathbb{C}})$ and so the stabiliser subgroup of any 
particular loxodrome must be just $1$-dimensional. 

\begin{theorem}[\rm Cf.~\S3.2.1 of~\cite{S}]\label{2d-classification} 
On the round two-sphere, the following
list constitutes a classification of the conformally homogeneous
unparameterised curves:
\begin{itemize}
\item the round circles,
\item the right-handed loxodromes with bearing $\beta>0$,
\item the left-handed loxodromes with bearing $\beta<0$.
\end{itemize}
\end{theorem}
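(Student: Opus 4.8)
The plan is to reduce the statement to the classification, up to conjugacy, of the one‑parameter subgroups of the Möbius group $\mathrm{PSL}(2,\mathbb{C})$. First I would check that a connected conformally homogeneous unparameterised curve $\gamma\subset S^2=\mathbb{CP}_1$ is, in a neighbourhood of each point, an orbit of a one‑parameter subgroup: if a subgroup $G\subseteq\mathrm{PSL}(2,\mathbb{C})$ acts transitively on $\gamma$ then we may take $G$ connected, it is of positive dimension (a countable group cannot act transitively on a curve), its Lie algebra $\mathfrak g\subseteq\mathfrak{sl}(2,\mathbb{C})$ cannot annihilate any $z_0\in\gamma$ (else $\gamma=\{z_0\}$), and any $\xi\in\mathfrak g$ with $\xi_{z_0}\neq0$ exhibits $\gamma$ near $z_0$ as the integral curve of $\xi$; conversely, away from their zeros the integral curves of any nonzero conformal Killing field are conformally homogeneous. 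Since the conformal Killing fields on $S^2$ are exactly the $\Re(Z)$ with $Z=(az^2+bz+c)\,\partial/\partial z$ as in (\ref{killing}), it therefore suffices to classify the nonzero such $Z$ up to conjugacy by $\mathrm{PSL}(2,\mathbb{C})$ and up to real rescaling (which only reparameterises), to read off the orbits, and to verify at the end that curves obtained from conjugate generators are conformally equivalent while those from non‑conjugate generators are not.

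The organising invariant I would use is the discriminant $b^2-4ac$ of the generating quadratic, which equals $-4\det\xi$ for the corresponding $\xi\in\mathfrak{sl}(2,\mathbb{C})$ and is thus conjugacy‑invariant and positively scaled under real rescaling of $Z$; a nonzero $\xi$ is then determined up to conjugacy and rescaling by whether this discriminant is $0$, positive, negative, or non‑real. If it is $0$ then $\xi$ is nilpotent, $Z$ conjugates to $\partial/\partial z$, and the orbits of $\Re(\partial/\partial z)=\partial/\partial x$ are the horizontal lines, i.e.\ round circles through $\infty$. Otherwise $\xi$ is semisimple, $Z$ conjugates to $\lambda z\,\partial/\partial z$ with $\lambda\in\mathbb{C}\setminus\{0\}$ determined up to real multiples and sign, and the orbits of $\Re(\lambda z\,\partial/\partial z)$ are: rays from $0$ to $\infty$, i.e.\ arcs of round circles, when $\lambda\in\mathbb{R}$ (discriminant $>0$); round circles centred at $0$ when $\lambda\in i\mathbb{R}$ (discriminant $<0$); and, when the discriminant is non‑real, the logarithmic spirals $z=z_0e^{\lambda t}$, which by the discussion preceding (\ref{log_spiral}) are precisely the loxodromes of bearing $\beta=\Re(\lambda)/\Im(\lambda)\in\mathbb{R}\setminus\{0\}$, right‑handed exactly when $\beta>0$.

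It then remains to assemble the list and show it is irredundant and complete. Every homogeneous curve is conformally equivalent to a round circle or to a loxodrome of some bearing $\beta\in\mathbb{R}\setminus\{0\}$; within each named type the Möbius group acts transitively — on round circles by the standard fact, and on right‑handed loxodromes of fixed bearing by the homogeneity of their $5$‑dimensional moduli space established in Section~3 — so conjugate generators do produce conformally equivalent curves. Conversely, a round circle is not conformally equivalent to any loxodrome, since their symmetry algebras have dimensions $3$ (a conjugate of $\mathfrak{sl}(2,\mathbb{R})$) and $1$ respectively, and loxodromes of bearings $\beta\neq\beta'$ are inequivalent because $\beta$ — recovered from the (up to scale unique) generator via the normalised discriminant (\ref{normalised_discriminant}) — is a conformal invariant, which in particular separates the right‑handed loxodromes ($\beta>0$) from the left‑handed ones ($\beta<0$). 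This yields exactly the three items in the statement.

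The step I expect to be the main obstacle is the reduction in the first paragraph. Turning ``$\gamma$ is conformally homogeneous'' into ``$\gamma$ is an orbit of a one‑parameter subgroup'' rigorously requires care with non‑closed curves — where the flow of a Killing field tangent to $\gamma$ can push points off the ends of $\gamma$ — with possibly disconnected symmetry groups, and with the fact, already visible above, that the parabolic, hyperbolic, and elliptic conjugacy classes of generators all produce the same curve type (the round circle); consequently the bookkeeping must be organised around the curves rather than the generators. Everything after that reduction is a short and essentially mechanical unwinding of normal forms in $\mathfrak{sl}(2,\mathbb{C})$ together with the invariants already extracted in Section~3.
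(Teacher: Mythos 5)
Your proposal is correct and follows essentially the same route as the paper: both arguments classify the nonzero conformal Killing fields $\Re(Z)$ up to M\"obius equivalence into the normal forms $\partial/\partial z$ and $\lambda z\,\partial/\partial z$ (the paper by moving a zero of the quadric to $\infty$ and translating, you by the discriminant $-4\det\xi$ and Jordan form in ${\mathfrak{sl}}(2,{\mathbb{C}})$, which amounts to the same dichotomy) and then read off the flows as circles, rays, or logarithmic spirals. You are more explicit than the paper about the reduction from ``conformally homogeneous'' to ``orbit of a one-parameter subgroup'' and about the irredundancy of the list via the invariants $\beta$ and the symmetry-algebra dimension, but these are elaborations of, not departures from, the paper's argument.
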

\begin{proof} On the round two-sphere, conformally identified as the Riemann 
sphere ${\mathbb{CP}}_1$, the conformal Killing fields are the same as the 
global holomorphic vector fields, which in turn may be realised as the space 
of binary quadrics
$$\textstyle\Gamma({\mathbb{CP}}_1,\Theta)
\cong\Gamma({\mathbb{CP}}_1,{\mathcal{O}}(2))
\cong\bigodot^2\!{\mathbb{C}}^2,$$
as in (\ref{killing}).  Without loss of generality, we may always suppose that
such a quadric has a zero at $\infty\in{\mathbb{CP}}_1$, equivalently that
$a=0$ in the field $Z$ from~(\ref{killing}).  Then, by translation, we may
suppose that
$$Z=c\frac\partial{\partial z}\quad\mbox{or}\quad
Z=bz\frac\partial{\partial z}$$
according to whether $Z$ has a double zero at $\infty\in{\mathbb{CP}}_1$ or
not.  Looking at (\ref{conformal_motions}), we now see that, in the first case,
we encounter only translations whose flows are straight lines in the affine
chart ${\mathbb{C}}\hookrightarrow{\mathbb{CP}}_1$ and, therefore, circles
on~${\mathbb{CP}}_1$. In the second case (\ref{conformal_motions}) gives
$$\overbrace{\lambda\Big(x\frac\partial{\partial x}
+y\frac\partial{\partial y}\Big)}^{\makebox[0pt]{dilations}}
+\overbrace{F\Big(x\frac\partial{\partial y}
-y\frac\partial{\partial x}\Big)}^{\makebox[0pt]{rotations}}$$
whose flows are logarithmic spirals (\ref{log_spiral}) if both $\lambda$ and
$F$ are non-zero.  Otherwise the non-trivial flows are circles centred on the
origin, if $\lambda=0$, or rays through the origin, if $F=0$.
\end{proof}

The four main compass directions, north, south, east, and west are called the 
{\em cardinal\/} directions. The intermediate directions such as north-west are 
known as {\em ordinal\/} and it is clear from (\ref{normalised_discriminant}) 
that loxodromes with ordinal bearings are generated precisely by holomorphic 
fields $Z$ with purely imaginary discriminant. 

\section{Conformal loxodromes} Perhaps these should more accurately be called
{\em M\"obius loxodromes\/}.  They are the distinguished unparameterised curves
in $2$-dimensional M\"obius geometry that are modelled on loxodromes, just as
conformal circles are modelled on round circles.  Recall that conformal circles
are controlled by (\ref{cc-equation}), an invariant third order ODE\@.  We
shall find that conformal loxodromes are controlled by an invariant fifth order
ODE, which we may derive as follows.

In fact, for simplicity, we shall just derive the equation for {\em ordinal\/}
loxodromes.  To do this, we firstly need to write the discriminant in terms of
adjoint tractors.
\begin{lemma}\label{discriminant} In two dimensions, the discriminant of 
$$\left[\begin{array}{c}\sigma_b\\
\mu_{bc}\quad\nu\\
\rho_b\end{array}\right]\quad\mbox{is}\quad
2(4\sigma^b\rho_b-\mu^{bc}\mu_{bc}+2\nu^2)
+4i\epsilon^{bc}(\nu\mu_{bc}-2\sigma_{[b}\rho_{c]}).$$
\end{lemma}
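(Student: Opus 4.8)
The plan is to recognise that the adjoint tractor bundle $\mathcal{A}$ is modelled on $\mathfrak{so}(3,1)\cong\mathfrak{sl}(2,\mathbb{C})$, acting on binary quadrics $Z=(az^2+bz+c)\,\partial/\partial z$ as in~(\ref{killing}), and that the ``discriminant'' of a section of $\mathcal{A}$ should be the complexification of the quantity $b^2-4ac$ (up to an overall real constant) expressed through the tensorial slots $(\sigma_b,\mu_{bc},\nu,\rho_b)$. Since the discriminant is a scalar quadratic in the entries that is invariant under the adjoint action, and the space of such invariants on $\mathfrak{sl}(2,\mathbb{C})$ is spanned over $\mathbb{C}$ by the Killing form (here the single Casimir, giving the complex-valued discriminant up to complex scale), it suffices to pin down the two real quadratic forms appearing as real and imaginary parts by evaluating on enough test fields. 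So concretely I would first fix, at a point, the correspondence between $(P,Q,\lambda,F,u,v)$ in~(\ref{conformal_motions}) and the slots $(\sigma_b,\mu_{bc},\nu,\rho_b)$ of $\mathcal{A}$ — the translations $u,v$ living in the top slot $\sigma_b$, the dilation/rotation pair $\lambda,F$ in the middle $\nu$ and $\mu_{bc}$, and the inversion pair $P,Q$ in the bottom slot $\rho_b$.

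Having that dictionary, the second step is purely formal: substitute $a=P+iQ$, $b=2(\lambda+iF)$, $c=2(u+iv)$ into $b^2-4ac$ to get $4(\lambda+iF)^2-4(P+iQ)\cdot 2(u+iv)$, expand into real and imaginary parts, and then re-express $\lambda^2-F^2$, $\lambda F$, $Pu-Qv$, $Pv+Qu$, $u^2+v^2$ etc.\ in terms of the tensorial contractions $\sigma^b\rho_b$, $\mu^{bc}\mu_{bc}$, $\nu^2$, $\epsilon^{bc}\nu\mu_{bc}$, $\epsilon^{bc}\sigma_{[b}\rho_{c]}$. In two dimensions $\mu_{bc}$ is a $2$-form so $\mu_{bc}=\mu\,\epsilon_{bc}$ for a scalar $\mu$, which makes $\mu^{bc}\mu_{bc}=2\mu^2$ and $\epsilon^{bc}\mu_{bc}=2\mu$, and $\sigma_b$, $\rho_b$ each have two real components; matching these against $(P,Q,\lambda,F,u,v)$ is a finite linear-algebra check that reproduces the stated formula $2(4\sigma^b\rho_b-\mu^{bc}\mu_{bc}+2\nu^2)+4i\epsilon^{bc}(\nu\mu_{bc}-2\sigma_{[b}\rho_{c]})$, including the overall normalisation.

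The third step is to verify that this formula is genuinely independent of the choice of background metric, i.e.\ invariant under the transformation~(\ref{adjoint_transformation}). One can either argue abstractly — the discriminant is an $\mathrm{SL}(2,\mathbb{C})$-invariant of the adjoint representation, hence a fortiori invariant under the parabolic subgroup $P$ whose action on the associated graded is exactly~(\ref{adjoint_transformation}) — or one can substitute $\sigma_b\mapsto\sigma_b$, $\mu_{bc}\mapsto\mu_{bc}+2\Upsilon_{[b}\sigma_{c]}$, $\nu\mapsto\nu+\Upsilon^b\sigma_b$, $\rho_b\mapsto\rho_b+\Upsilon^a\mu_{ab}-\Upsilon_b\nu-\Upsilon^a\Upsilon_b\sigma_a+\tfrac12\Upsilon^a\Upsilon_a\sigma_b$ and watch the $\Upsilon$-terms cancel. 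I would present the abstract argument and offer the direct computation as a remark, since the cancellation is routine but tedious.

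The main obstacle I anticipate is nailing down the precise identification between the Lie-algebra coordinates $(P,Q,\lambda,F,u,v)$ of~(\ref{conformal_motions}) and the tractor slots, together with the attendant normalisation constants: getting the $4$'s, $2$'s, and the sign of the $\epsilon$-contractions right requires care about conventions (the factor $\tfrac12$ in $\partial/\partial z$, the skew part $\sigma_{[b}\rho_{c]}=\tfrac12(\sigma_b\rho_c-\sigma_c\rho_b)$, the orientation of $\epsilon^{bc}$, and the weighting of $U_a$). Once one test field in each of the three strata of $\mathcal{A}$ is matched correctly, the rest follows by linearity and $P$-equivariance, so the essential work is organising this bookkeeping cleanly rather than any conceptual difficulty; I would do it by evaluating the known discriminant on the specific generators arising from loxodromes, e.g.\ the field~(\ref{generator}), as a consistency check on the final formula.
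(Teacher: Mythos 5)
Your proposal is correct and follows essentially the same route as the paper: the paper writes down the explicit (BGG-split, parallel) adjoint tractor corresponding to the general conformal Killing field \eqref{conformal_motions}, computes the two real quadratic contractions $4\sigma^b\rho_b-\mu^{bc}\mu_{bc}+2\nu^2$ and $\epsilon^{bc}(2\sigma_{[b}\rho_{c]}-\nu\mu_{bc})$ for that section, and matches them against $b^2-4ac$ under the substitution $a=P+iQ$, $b=2(\lambda+iF)$, $c=2(u+iv)$ --- exactly your dictionary-plus-substitution computation. The only cosmetic difference is that you evaluate at a point and invoke $P$-equivariance (and the uniqueness of the quadratic Casimir) to extend, whereas the paper carries the full $(x,y)$-dependent splitting through the calculation; both are sound.
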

\begin{proof}
In flat space with co\"ordinates $(x^1,x^2)=(x,y)$, the general conformal
Killing field (\ref{conformal_motions}) corresponds to the following section of
the adjoint tractor bundle
$$\begin{array}{c}
\sigma_1=u+\lambda x-Fy+\frac12P(x^2-y^2)-Qxy\\[2pt]
\sigma_2=v+\lambda y+Fx+Pxy+\frac12Q(x^2-y^2)\\[4pt]
\mu_{12}=F+Py+Qx\quad\qquad\nu=\lambda+Px-Qy\\[4pt]
\rho_1=-P\enskip\quad\rho_2=Q\,,
\end{array}$$
which is easily checked to be parallel for the adjoint tractor connection.  (In
other words, this section of ${\mathcal{A}}$ is the result of applying the
usual BGG splitting operator~\cite{HSSS} to the conformal Killing field $X$ in
(\ref{conformal_motions}).)  For this section, a computation yields
$$\begin{array}{rcl}
2(4\sigma^b\rho_b-\mu^{bc}\mu_{bc}+2\nu^2)&=&4(\lambda^2-F^2-2Pu+2Qv)\\[4pt]
4\epsilon^{bc}(2\sigma_{[b}\rho_{c]}-\nu\mu_{bc})
&=&8(Qu+Pv-\lambda F),
\end{array}$$
where we have chosen to normalise $\epsilon^{bc}$ such that $\epsilon^{12}=1$.
But, with the conventions leading up to~(\ref{conformal_motions}), it is easily
checked that
$$b^2-4ac=4\big[(\lambda^2-F^2-2Pu+2Qv)+2i(\lambda F-Qu-Pv)\big],$$
as advertised.
\end{proof}
In arbitrary dimensions, it is interesting to note that
$$4\sigma^b\rho_b-\mu^{bc}\mu_{bc}+2\nu^2\quad\mbox{and}\quad
2\sigma_{[b}\rho_{c]}-\nu\mu_{bc}$$
are invariants of adjoint tractors (with values in $\Wedge^0$ and
$\Wedge^2[2]$, respectively).  In two dimensions, Lemma~\ref{discriminant}
implies that ordinal loxodromes in flat space are generated by parallel adjoint
tractors with $4\sigma^b\rho_b-\mu^{bc}\mu_{bc}+2\nu^2=0$.  The tractor
interpretation of the Doubrov-\v{Z}\'adn\'{\i}k formulation~\cite{DZ} of
unparameterised distinguished curves modelled on ordinal loxodromes in M\"obius
geometry hence requires that we find a section of ${\mathcal{A}}$ of the form
$$\left[\begin{array}{c}\sigma_b\\
\mu_{bc}\quad\nu\\
\rho_b\end{array}\right]
=\left[\begin{array}{c}U_b\\
\rule{10pt}{2pt}\quad\rule{10pt}{2pt}\\
\rule{10pt}{2pt}\end{array}\right]\quad\mbox{such that}\enskip
4\sigma^b\rho_b-\mu^{bc}\mu_{bc}+2\nu^2=0$$
along $\gamma\hookrightarrow M$ and so that this section is preserved up to
scale by $\partial$ along~$\gamma$.  It is straightforward though tedious to
figure out what $\mu_{bc}$, $\nu$, and $\rho_b$ are forced to be: 
\begin{lemma}\label{this_is_forced}
For any smooth oriented curve $\gamma\hookrightarrow M$, in the presence of a
background metric, wherever $J_b\not=0$ let us define a density
$\kappa$ of conformal weight $-1$ along $\gamma$ by
\begin{equation}\label{def_of_kappa}
\partial J_b+A^cJ_cU_b+2\kappa J_b=0.\end{equation}
Then the formula
\begin{equation}\label{lift_of_U}\left[\begin{array}{c}\sigma_b\\
\mu_{bc}\quad\nu\\
\rho_b\end{array}\right]
=\left[\begin{array}{c}U_b\\
2U_{[b}A_{c]}\quad\kappa\\
J_b+\frac12(A^cA_c-\kappa^2)U_b+\kappa A_b\end{array}\right]\end{equation}
gives an invariantly defined section of ${\mathcal{A}}[-1]$ along~$\gamma$, 
having 
\begin{equation}\label{null}
4\sigma^b\rho_b-\mu^{bc}\mu_{bc}+2\nu^2=0.\end{equation}
\end{lemma}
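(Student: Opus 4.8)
The plan is to verify the two assertions of Lemma~\ref{this_is_forced} directly: first, that the formula~(\ref{lift_of_U}) transforms correctly under conformal rescaling so as to define a genuine section of $\mathcal{A}[-1]$ along $\gamma$, and second, that the purely algebraic identity~(\ref{null}) holds for the tractor exhibited. For the invariance, I would feed the components
$$\sigma_b=U_b,\quad \mu_{bc}=2U_{[b}A_{c]},\quad \nu=\kappa,\quad
\rho_b=J_b+\tfrac12(A^cA_c-\kappa^2)U_b+\kappa A_b$$
into the transformation law~(\ref{adjoint_transformation}) and check consistency. The top slot $\sigma_b=U_b$ is conformally invariant (as already used in~(\ref{direct_check})), so the weight $-1$ is exactly what makes $\sigma_b$ a well-defined section of $\Wedge^1[1]=\Wedge^1[2][-1]$. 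The middle slots must then match: using $\widehat{A}_c=A_c-\Upsilon_c+(\Upsilon\cdot U)U_c$ from~(\ref{A-transform}) and skew-symmetrising against $U_b$, one gets $2U_{[b}\widehat A_{c]}=2U_{[b}A_{c]}-2U_{[b}\Upsilon_{c]}$, which is precisely $\widehat{\mu}_{bc}=\mu_{bc}+2\Upsilon_{[b}\sigma_{c]}$ up to sign conventions; and $\widehat\nu=\nu+\Upsilon^b\sigma_b$ forces $\widehat\kappa=\kappa+\Upsilon^aU_a$. One must then confirm that this is the transformation law the defining equation~(\ref{def_of_kappa}) actually imposes on $\kappa$: since $J_b$ is conformally invariant (by~(\ref{jerk-transform})) while $\partial$ and $A^c$ carry the rescaling, differentiating and comparing shows that $\kappa$ shifts by $\Upsilon^aU_a$, matching. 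The bottom slot is the fussiest: one must verify that $\widehat\rho_b=J_b+\tfrac12(\widehat A^c\widehat A_c-\widehat\kappa^2)U_b+\widehat\kappa\widehat A_b$ equals $\rho_b+\Upsilon^a\mu_{ab}-\Upsilon_b\nu-\Upsilon^a\Upsilon_b\sigma_a+\tfrac12\Upsilon^a\Upsilon_a\sigma_b$. Expanding $\widehat A^c\widehat A_c=A^cA_c-2\Upsilon^cA_c+2(\Upsilon\cdot U)^2+\Upsilon^c\Upsilon_c-(\Upsilon\cdot U)^2$ — being careful with the index positions — and $\widehat\kappa^2=\kappa^2+2\kappa(\Upsilon\cdot U)+(\Upsilon\cdot U)^2$, and $\widehat\kappa\widehat A_b=\kappa A_b-\kappa\Upsilon_b+\kappa(\Upsilon\cdot U)U_b+(\Upsilon\cdot U)A_b-\ldots$, the cross-terms should organise exactly into the required expression. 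This reconciliation is where I expect the bulk of the bookkeeping to lie, and it is the main obstacle — not conceptually deep, but the place where a sign or a factor of $\tfrac12$ is easiest to drop.

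For the second assertion~(\ref{null}), the computation is purely pointwise and algebraic. I would substitute the components of~(\ref{lift_of_U}) into $4\sigma^b\rho_b-\mu^{bc}\mu_{bc}+2\nu^2$. The term $4\sigma^b\rho_b=4U^b\big(J_b+\tfrac12(A^cA_c-\kappa^2)U_b+\kappa A_b\big)$ simplifies using the two orthogonality facts $U^bU_b=1$, $U^bA_b=0$ (the latter because $\partial(U^bU_b)=2U^bA_b=0$), and $U^bJ_b=0$ (established in the proof of Theorem~\ref{mini_thm}); this gives $4U^bJ_b+2(A^cA_c-\kappa^2)+4\kappa U^bA_b=2A^cA_c-2\kappa^2$. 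Next, $\mu^{bc}\mu_{bc}=(2U^{[b}A^{c]})(2U_{[b}A_{c]})=2(U^bU_bA^cA_c-(U^bA_b)^2)=2A^cA_c$ in two dimensions (here one uses that $\mu_{bc}$ is a $2$-form, so the contraction of $2U_{[b}A_{c]}$ against itself is $2(|U|^2|A|^2-(U\cdot A)^2)$). Finally $2\nu^2=2\kappa^2$. Adding, $(2A^cA_c-2\kappa^2)-2A^cA_c+2\kappa^2=0$, as claimed. I would present exactly these three contractions in sequence, noting the orthogonality relations invoked at each step, and conclude.

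One small point worth flagging in the write-up: the lemma asserts $\kappa$ is well-defined only where $J_b\neq 0$, because~(\ref{def_of_kappa}) is only then an honest equation determining a scalar (contracting~(\ref{def_of_kappa}) with $J^b$, or using that $J_b$ spans a line transverse to $U_b$ wherever it is nonzero — indeed $J_b$ and $U_b$ are independent since $U^bJ_b=0$ but $J_b\neq0$, so in two dimensions they span, but the equation~(\ref{def_of_kappa}) projects onto the $J_b$-component and divides). I would remark that on this open set the construction is manifestly smooth, so all the differentiations above are legitimate, and leave the behaviour at the zeros of $J_b$ to the subsequent discussion of the fifth-order ODE.
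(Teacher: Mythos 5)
Your route is the same as the paper's: check the transformation law slot by slot against (\ref{adjoint_transformation}) (the paper likewise derives $\widehat\kappa=\kappa+U^a\Upsilon_a$ from (\ref{def_of_kappa}) and dismisses the rest as ``straightforward though tedious''), and then verify (\ref{null}) by direct contraction. Your computation of the null condition is complete and correct, and your top and middle slots match exactly (not merely ``up to sign conventions'': $2\Upsilon_{[b}U_{c]}=-2U_{[b}\Upsilon_{c]}$). The one concrete slip is in the bottom slot: your expansion of $\widehat A^c\widehat A_c$ nets to $+(U^a\Upsilon_a)^2$, whereas the cross term $2\bigl(-\Upsilon^c\bigr)\bigl((U^a\Upsilon_a)U_c\bigr)=-2(U^a\Upsilon_a)^2$ combines with the square $+(U^a\Upsilon_a)^2$ to give $-(U^a\Upsilon_a)^2$; the correct identity is
$$\widehat A^c\widehat A_c=A^cA_c-2\Upsilon^cA_c+\Upsilon^c\Upsilon_c-(U^a\Upsilon_a)^2,$$
exactly as the paper records in its later bullet list verifying (\ref{ordinal_loxodrome}). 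With your sign the bottom-slot reconciliation would fail to close, so this needs fixing before the ``bookkeeping'' you defer can actually be completed. Two minor remarks: the evaluation $\mu^{bc}\mu_{bc}=2A^cA_c$ holds in any dimension, not just two (only the well-definedness of $\kappa$ uses $n=2$, via $U_b,J_b$ spanning); and it is worth stating explicitly that $U^b(\partial J_b+A^cJ_cU_b)=\partial(U^bJ_b)-A^bJ_b+A^cJ_c=0$, which is what guarantees (\ref{def_of_kappa}) is solvable for a scalar $\kappa$ rather than overdetermined.
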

\begin{proof}
As observed at the end of the proof of Theorem~\ref{mini_thm}, we have 
$U^bJ_b=0$ along $\gamma$ so $U_b,J_b$ are linearly independent wherever 
$J_b\not=0$. It follows that $\kappa$ is well-defined by~(\ref{def_of_kappa}) 
and from the invariance (\ref{jerk-transform}) of $J_b$ and the formula 
(\ref{A-transform}) for $\widehat A_a$ it follows that
\begin{equation}\label{kappa-transform}
\widehat\kappa=\kappa+U^a\Upsilon_a.\end{equation}
This is part of what needs to be shown (\ref{adjoint_transformation}) in order
to verify that (\ref{lift_of_U}) be invariantly defined.  The remaining
verifications are straightforward though tedious.  Finally, a simple
calculation checks that (\ref{null}) holds.
\end{proof}
In fact, any coefficient in front of $J_b$ on the last line of
(\ref{lift_of_U}) will give an equally good invariant lift of~$U_b$ to
${\mathcal{A}}[-1]$ but the following simple formula depends crucially on that
coefficient being~$1$.  
\begin{lemma}\label{apply_partial} For the canonical section \eqref{lift_of_U}
of ${\mathcal{A}}[-1]$, in the presence of a background metric, we have
$$\textstyle\partial\left[\begin{array}{c}\sigma_b\\
\mu_{bc}\quad\nu\\
\rho_b\end{array}\right]
=(\partial\kappa+\frac12(A^aA_a+\kappa^2)+U^aU^c\Rho_{ac})
\left[\begin{array}{c}0\\
0\quad 1\\
A_b-\kappa U_b\end{array}\right]
-\kappa\left[\begin{array}{c}\sigma_b\\
\mu_{bc}\quad\nu\\
\rho_b\end{array}\right].$$
\end{lemma}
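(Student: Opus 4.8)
\medskip
\noindent\textbf{Proof proposal.}\enskip
The plan is to work with the given background metric throughout --- so that \eqref{lift_of_U} and every tractor below is a genuine section and no weight bookkeeping is needed --- and then to compute $\partial$ of \eqref{lift_of_U} by expanding it in tractors whose $\partial$-derivatives are already available from Section~\ref{cc}. Write $\Phi$ for the tractor with $U_b$ in its bottom slot and zeros elsewhere (as in the remark after Theorem~\ref{mini_thm}), $\Xi$ for the third generator of ${\mathcal{B}}$ from Proposition~\ref{the_bundle_B}, $\Lambda$ for the tractor with $J_b$ in its bottom slot and zeros elsewhere, and $\Theta$ for the tractor whose only nonzero entry is $2U_{[b}J_{c]}$ in the $\mu$-slot. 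Since $\partial\Phi$ is precisely the second generator of ${\mathcal{B}}$, one checks at once that the section \eqref{lift_of_U} equals $\Xi+\kappa\,\partial\Phi+\frac12(A^cA_c-\kappa^2)\Phi+\Lambda$, the $\Xi$-coefficient being forced to $1$ by $\sigma_b=U_b$. As $\partial=U^a\nabla_a$ is a derivation with respect to multiplication by densities, computing $\partial$ of \eqref{lift_of_U} now only requires $\partial\Phi$, $\partial^2\Phi$, $\partial\Xi$, $\partial\Lambda$, $\partial(A^cA_c)$, and $\partial\kappa$.

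Next I would assemble these. We already have $\partial\Phi$ (recorded in Section~\ref{cc}), $\partial^2\Phi$ is \eqref{partial_squared_Phi}, and $\partial\Xi$ is \eqref{second_equation_from_the_proof} --- in which $\Theta$ appears with coefficient $+1$. The one genuinely new computation is $\partial\Lambda$: feeding $\Lambda$ into the explicit formula for the adjoint tractor directional derivative recorded in Section~\ref{cc} produces $0$ in the top slot, $-2U_{[b}J_{c]}$ in the $\mu$-slot, $U^aJ_a$ in the $\nu$-slot, and $\partial J_b$ in the bottom slot; since $U^aJ_a=0$ (established at the end of the proof of Theorem~\ref{mini_thm}) and $\partial J_b=-A^cJ_cU_b-2\kappa J_b$ by the defining equation \eqref{def_of_kappa}, this collapses to $\partial\Lambda=-\Theta-A^cJ_c\,\Phi-2\kappa\,\Lambda$. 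Finally, differentiating $A^cA_c$ and using \eqref{jerk} together with $A^aU_a=0$ gives $\partial(A^cA_c)=2A^cJ_c+2\Rho_{ac}U^aA^c$; this is the only way the Rho-tensor enters the bookkeeping of the lower-order terms.

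It is then cleanest to compute $\partial$ of \eqref{lift_of_U} \emph{plus} $\kappa$ times \eqref{lift_of_U}, since the asserted identity is equivalent to this combination equalling $(\partial\kappa+\frac12(A^aA_a+\kappa^2)+U^aU^c\Rho_{ac})(\partial\Phi-\kappa\Phi)$, where $\partial\Phi-\kappa\Phi$ is the tractor with $1$ in the $\nu$-slot and $A_b-\kappa U_b$ in the bottom slot. The decisive point --- and the reason the coefficient of $J_b$ on the last line of \eqref{lift_of_U} must be exactly $1$ --- is that the two occurrences of $\Theta$, one from $\partial\Xi$ (with coefficient $1$) and one from $\partial\Lambda$ (weighted by the coefficient of $J_b$ in the bottom slot of \eqref{lift_of_U}), cancel; had that coefficient been $\lambda\neq1$, a residual $(1-\lambda)\Theta$ would survive. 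Next, $\kappa\,\partial^2\Phi$ and $\partial\Lambda$ between them contribute $-\kappa\Xi-\kappa\Lambda$, which cancels the $\Xi$- and $\Lambda$-terms of $\kappa$ times \eqref{lift_of_U}; what remains is a combination of $\Phi$ and $\partial\Phi$ alone, and reading off its two coefficients --- where \eqref{jerk} and \eqref{def_of_kappa} are used --- yields precisely the stated expression. I expect the only real obstacle to be clerical: correctly tracking the many $\Phi$- and $\partial\Phi$-terms, with their $A^cA_c$, $\kappa^2$, $\partial\kappa$, and $\Rho$ coefficients, while invoking \eqref{jerk} and \eqref{def_of_kappa} at the right moments. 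There is no conceptual difficulty once the cancellation in the $\mu$-slot is spotted.
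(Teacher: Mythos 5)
Your proposal is correct and is essentially the computation the paper waves at with ``A straightforward computation'': decomposing \eqref{lift_of_U} as $\Xi+\kappa\,\partial\Phi+\frac12(A^cA_c-\kappa^2)\Phi+\Lambda$ and feeding in \eqref{partial_squared_Phi}, \eqref{second_equation_from_the_proof}, $U^aJ_a=0$, and \eqref{def_of_kappa} does make every slot check out, including the $\Theta$-cancellation that explains the paper's remark about the coefficient of $J_b$ needing to be $1$. No gaps.
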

\begin{proof} A straightforward computation.
\end{proof}
Combining Lemmata~\ref{this_is_forced} and~\ref{apply_partial} immediately
leads to the following conclusion.  
\begin{theorem} Suppose $M$ is a smooth $2$-dimensional manifold equipped with
a M\"obius structure and suppose that $\gamma\hookrightarrow M$ is a smooth
oriented curve with non-vanishing normalised jerk~$J_b$.  Then the invariantly
defined line subbundle
$${\mathrm{span}}\left(\left[\begin{array}{c}U_b\\
2U_{[b}A_{c]}\quad\kappa\\
J_b+\frac12(A^cA_c-\kappa^2)U_b+\kappa A_b\end{array}\right]\right),$$
where $\partial J_b+A^cJ_cU_b+2\kappa J_b=0$ defines~$\kappa$, is preserved by
$\partial$ along $\gamma$ if and only if
\begin{equation}\label{ordinal_loxodrome}
\textstyle\partial\kappa+\frac12(A^aA_a+\kappa^2)+U^aU^b\Rho_{ab}=0.
\end{equation}
\end{theorem}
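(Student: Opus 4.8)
The plan is to obtain the theorem as an immediate consequence of Lemmata~\ref{this_is_forced} and~\ref{apply_partial}. Write $\Psi$ for the section~(\ref{lift_of_U}) of ${\mathcal{A}}[-1]$ along $\gamma$. Since $J_b$ is assumed non-vanishing along $\gamma$, Lemma~\ref{this_is_forced} guarantees that $\kappa$ is well-defined by~(\ref{def_of_kappa}) and that $\Psi$ is invariantly defined along $\gamma$; in particular ${\mathrm{span}}(\Psi)$ is an invariantly defined line subbundle of ${\mathcal{A}}[-1]|_\gamma$, and, since this subbundle has rank one, it is preserved by $\partial$ precisely when $\partial\Psi$ is a pointwise multiple of $\Psi$. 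So it remains only to decide when that happens.

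To this end I would invoke Lemma~\ref{apply_partial}, which expresses the derivative of $\Psi$ as
$$\partial\Psi=c\,\Theta-\kappa\,\Psi,\qquad
\Theta\equiv\left[\begin{array}{c}0\\ 0\quad 1\\ A_b-\kappa U_b\end{array}\right],\qquad
c\equiv\partial\kappa+\frac12(A^aA_a+\kappa^2)+U^aU^b\Rho_{ab}.$$
The key observation is that $\Theta$ and $\Psi$ are pointwise linearly independent along $\gamma$: the section $\Psi$ carries the non-vanishing covector $U_b$ in its top ($TM$-valued) slot, whereas $\Theta$ has $0$ there. Consequently $\partial\Psi$ is a multiple of $\Psi$ at a given point of $\gamma$ if and only if the scalar $c$ vanishes at that point.

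This yields both implications at once. If~(\ref{ordinal_loxodrome}) holds, i.e.\ $c\equiv 0$ along $\gamma$, then $\partial\Psi=-\kappa\Psi$, so ${\mathrm{span}}(\Psi)$ is preserved by $\partial$ (indeed parallel up to scale, the scale factor being recorded by $\kappa$). Conversely, if ${\mathrm{span}}(\Psi)$ is preserved, then $\partial\Psi$ is a pointwise multiple of $\Psi$, say $\partial\Psi=f\Psi$; comparing with the displayed formula of Lemma~\ref{apply_partial} gives $(f+\kappa)\Psi=c\,\Theta$, whence linear independence of $\Psi$ and $\Theta$ forces $c\equiv 0$ along $\gamma$, which is precisely~(\ref{ordinal_loxodrome}). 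I do not anticipate any genuine obstacle here: the substantive content --- the transformation law~(\ref{kappa-transform}) underpinning the invariance of $\Psi$, and the precise cancellations behind the clean formula of Lemma~\ref{apply_partial} --- has already been packaged into those two lemmas, so the theorem is essentially a one-line corollary. The only ingredient that really uses the hypothesis $J_b\not=0$ is the very existence of $\kappa$, and hence of $\Psi$; the linear independence of $\Theta$ and $\Psi$ holds unconditionally.
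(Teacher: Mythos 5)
Your proposal is correct and follows exactly the route the paper intends: the theorem is stated there as an immediate consequence of Lemmata~\ref{this_is_forced} and~\ref{apply_partial}, and your observation that $\Theta$ and $\Psi$ are pointwise linearly independent (since $\Psi$ has $U_b\neq 0$ in the top slot while $\Theta$ has $0$ there) is precisely the small detail needed to make ``immediately'' rigorous.
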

We may formalise the conclusion of this theorem as follows.
\begin{definition}An {\em ordinal conformal loxodrome\/} is a smooth curve 
$\gamma\hookrightarrow M$ along which $J_b\not=0$ and (\ref{ordinal_loxodrome})
holds, where $\kappa$ is defined by~(\ref{def_of_kappa}).
\end{definition}

We remark that (\ref{ordinal_loxodrome}) is M\"obius invariant by construction 
but, of course, this can be verified directly as follows:
\begin{itemize}
\item the density $\kappa$ has weight $-1$ 
and $\widehat\kappa=\kappa+U^a\Upsilon_a$ so
$$\begin{array}{rcl}\widehat\partial\widehat\kappa
&=&\partial(\kappa+U^a\Upsilon_a)-U^b\Upsilon_b(\kappa+U^a\Upsilon_a)\\[4pt]
&=&\partial\kappa+A^a\Upsilon_a+U^a\partial\Upsilon_a
-U^a\Upsilon_a\kappa-(U^a\Upsilon_a)^2,\end{array}$$
\item from (\ref{A-transform}) we find
$$\widehat A^a\widehat A_a
=A^aA_a+\Upsilon^a\Upsilon_a-(U^a\Upsilon_a)^2-2A^a\Upsilon_a,$$
\item from (\ref{kappa-transform}) we find
$$\widehat\kappa^2=\kappa^2+2U^a\Upsilon_a\kappa+(U^a\Upsilon_a)^2,$$
\item from (\ref{Rho-transform}) we find
$$\textstyle U^aU^b\widehat\Rho_{ab}
=U^aU^b\Rho_{ab}-U^a\partial\Upsilon_a
+(U^a\Upsilon_a)^2-\frac12\Upsilon^a\Upsilon_a,$$
\end{itemize}
and, since all correction terms cancel, it follows that
$\partial\kappa+\frac12(A^aA_a+\kappa^2)+U^aU^b\Rho_{ab}$ is an invariant
density of weight~$-2$.  Of course, this construction is all under the
assumption that $J_a\not=0$, which means that we are avoiding curves that, at
some point, osculate a conformal circle to third order.

\section{A fourth order ODE} 

At this point, one may smell something fishy
since, in~\cite{DK}, Dunajski and Kry\'nski manufacture a {\em fourth\/} order
conformally invariant ODE \cite[Equation~(4.14)]{DK} that, in flat two
dimensions, simultaneously controls the conformal circles and loxodromes
whereas (\ref{ordinal_loxodrome}) is {\em fifth\/} order and controls just the
ordinal loxodromes.  This smell comes from the red herring mentioned early
in~\S\ref{cc} regarding whether we are considering {\em parameterised\/} or
{\em unparameterised\/} curves.  In~\cite{DK}, curves $\gamma\hookrightarrow M$
are equipped with a parameterisation $t:\gamma\to{\mathbb{R}}$, which one may
use to normalise various quantities as follows.  Fixing attention on a
particular curve~$\gamma$, we may always pick a metric $g_{ab}$ in the
conformal class so that the given parameterisation $t$ is by arc-length.
Having done this, the remaining freedom 
$g_{ab}\mapsto\widehat g_{ab}=\Omega^2g_{ab}$ is constrained by requiring that
$\Omega|_\gamma=1$ and it follows that $\partial\Omega=0$ along~$\gamma$, in
other words that
\begin{equation}\label{normalised_by_parameterisation}
U^a\Upsilon_a=0,\quad\mbox{where}\enskip
\Upsilon_a=\Omega^{-1}\nabla_a\Omega.\end{equation}
Recall that the normalised jerk $J_b$, defined by (\ref{jerk}), is conformally
invariant of weight~$-1$. It follows that
$$\widehat\partial J_b=\partial J_b-2U^a\Upsilon_aJ_b-U^aJ_a\Upsilon_b
+\Upsilon^aJ_aU_b.$$
But recall that $U^aJ_a=0$, so from (\ref{A-transform}) we conclude that the
{\em normalised snap\/}
$$S_a\equiv\partial J_a+A^bJ_bU_a$$
transforms quite simply by
$$\widehat S_a=S_a-2U^b\Upsilon_bJ_a.$$
Therefore, along a {\em parameterised curve\/} with conformal factor
constrained by~(\ref{normalised_by_parameterisation}), we find that the
normalised snap is conformally (or M\"obius) invariant.  Therefore, the 
equation $S_a=0$ is an invariant fourth order ODE on {\em parameterised\/} 
curves, which is certainly satisfied along conformal circles.

To compare with~\cite{DK}, let us introduce the {\em Cotton-York tensor\/}
$Y_{abc}\equiv 2\nabla_{[a}\Rho_{b]c}$, which is M\"obius invariant in two
dimensions~\cite{C}, conformally invariant in three dimensions~\cite{BEG}, and 
in higher dimensions transforms by 
\begin{equation}\label{Y-transform}
\widehat Y_{abc}=Y_{abc}-W_{abc}{}^d\Upsilon_d,\end{equation} 
where $W_{abcd}$, the {\em conformal Weyl tensor\/}, is the trace-free part of
Riemann tensor and is conformally invariant.  The {\em contracted Bianchi
identity\/}
$$\nabla^dW_{abcd}+(n-3)Y_{abc}=0$$
for $n\geq 4$ also shows $Y_{abc}=0$ in the conformally flat case.  Indeed,
tractor calculus \cite{BEG,C} easily shows that $Y_{abc}$ is the obstruction to
M\"obius or conformal flatness in dimensions $2$ or $3$, whilst $W_{abcd}$ is
the obstruction in all higher dimensions. 

{From} (\ref{A-transform}) we see that 
$$W_{abc}{}^dU^c\widehat A_d=W_{abc}{}^dU^cA_d-W_{abc}{}^dU^c\Upsilon_d$$
and so from (\ref{Y-transform}) it follows that the $2$-form 
$$K_{ab}\equiv W_{abc}{}^dU^cA_d-Y_{abc}U^c$$
is M\"obius invariant in dimension $2$ and conformally invariant in all
dimensions $n\geq 3$ (the Weyl tensor vanishes by symmetry considerations in
dimensions $2$ and $3$).  The $2$-form $K_{ab}$ evidently vanishes in the
M\"obius or conformally flat cases.  (An invariant r\^{o}le for $K_{ab}$
remains mysterious but it certainly looks like $K_{ab}$ should arise from some
tractor curvature~\cite[\S2.5]{BEG}.)

Josef \v{S}ilhan has checked that, with the notation introduced above, the
invariant fourth order ODE \cite[Equation (4.14)]{DK} reads $S_a=K_{ab}U^b$.
When $K_{ab}=0$, solutions of this equation evidently include the conformal
circles $J_a=0$ but otherwise, as unparameterised curves in flat
two-dimensional space, must be loxodromes in accordance with
Theorem~\ref{2d-classification} above and this is confirmed by calculation
in~\cite[\S5]{DK}.

I would like to thank Maciej Dunajski for drawing my attention to~\cite{DK} 
after I had carelessly forgotten its contents.

\section{Outlook} The novelty of conformal loxodromes in $2$ dimensions stems
from their conformal invariance on the flat model, the round $2$-sphere.  What
I believed to be missing for conformal geometry in higher dimensions was a
classification of homogeneous curves on the round $n$-sphere, generalising
Theorem~\ref{2d-classification} above.  However, it was pointed out to me by an
anonymous referee that such a classification has been carried out for the
$3$-sphere in an unpublished work of Sulanke~\cite[Theorem~1]{S}.  Not only
that, but his proof identifies these homogeneous curves as orbits of certain
$1$-parameter subgroups of the group of conformal motions leading directly to
their symmetry algebras via an algorithm of~\cite{DKR} (reproduced in this
context as~\cite[Lemma~6]{EZ}).  Thus, we may insist, in higher dimensions,
that $\partial J_b$ lie in ${\mathrm{span}}(U_b,J_b)$ so that $\kappa$ may be
defined by~(\ref{def_of_kappa}) and then proceed to define a class of invariant
curves by~(\ref{ordinal_loxodrome}).  We could also call these curves
`loxodromes' but there are yet more homogeneous curves in the $n$-sphere for
$n\geq 3$ and hence yet more analogues in the curved setting.  It would surely
be useful to classify these invariant curves in all dimensions and identify the
ODE that control them.  Conformal circles are proving themselves to be
increasingly useful in conformal differential geometry where important results,
for example~\cite{CDT}, rely on knowing explicitly the ODE that controls where
they go.  Conformal loxodromes and the multitude of other conformally invariant
curves may prove similarly useful.  Of course, there are corresponding
questions in all parabolic geometries.

The anonymous referee also points out a general argument in~\cite[\S3.2]{S},
which says that the homogeneous curves in a homogeneous space $G/H$ are exactly
those having `constant curvatures' and that, for example, the requirement that
$\partial J_b$ lie in ${\mathrm{span}}(U_b,J_b)$ may be interpreted as the
curves with vanishing `conformal torsion.'  The referee suggests, therefore,
that the corresponding curves in the curved setting may also be approached as
restrictions on their conformal curvatures.  There are nice classical
formul{\ae} for these curvatures (in three dimensions see~\cite{CSW}).  It
would be interesting to compare the resulting theory with the ODE, yet to be
derived via the method of Doubrov-\v{Z}\'adn\'{\i}k~\cite{DZ}, as were obtained
for loxodromes in this article.

\end{document}